\renewcommand{\geq}{\geqslant}
\renewcommand{\leq}{\leqslant}
\newtheorem{theorem}{Theorem}
\newtheorem{prop}{Proposition}[section]
\newtheorem{defi}{Definition}
\newtheorem{cor}[prop]{Corollary}
\newtheorem{lemma}{Lemma}[section]
\newcommand{\be}{\begin{equation}}
\newcommand{\ee}{\end{equation}}
\newcommand\inter[1]{\overset{{}_\circ}{#1}}
\pgfplotsset{compat=newest}
\colorlet{darkgreen}{green!50!black}
\definecolor{darkseagreen}{rgb}{0.56, 0.74, 0.56}
\definecolor{lightcyan}{rgb}{0.88, 1.0, 1.0}
\definecolor{lightblue}{rgb}{0.68, 0.85, 0.9}
\definecolor{palecerulean}{rgb}{0.61, 0.77, 0.89}
\definecolor{lgreen} {RGB}{180,210,100}
\definecolor{dblue}  {RGB}{20,66,129}
\definecolor{ddblue} {RGB}{11,36,69}
\definecolor{lred}   {RGB}{220,0,0}
\definecolor{nred}   {RGB}{224,0,0}
\definecolor{norange}{RGB}{230,120,20}
\definecolor{nyellow}{RGB}{255,221,0}
\definecolor{ngreen} {RGB}{98,158,31}
\definecolor{dgreen} {RGB}{78,138,21}
\definecolor{nblue}  {RGB}{28,130,185}
\definecolor{jblue}  {RGB}{20,50,100}
\definecolor{Apricot} {RGB}{255, 170, 123} 
\definecolor{dpurple}  {RGB}{53,21,93}
\numberwithin{equation}{section}
\def\R{{\mathbb R}}
\def\N{{\mathbb N}}
\def\C{{\mathbb C}}
\def\Z{{\mathbb Z}}
\def\1{\mathbb{1}}
\def\E{{\mathbb E}}
\def\eps{{\epsilon}}
\def\P{{\mathbb P}}
\def\cal{\mathcal}
\def\Z{\mathbb{Z}}
\def\C{\mathbb{C}}
\def\R{\mathbb{R}}
\def\xhat{x^0}
\renewcommand{\epsilon}{\varepsilon}
\definecolor{darkgreen}{rgb}{0,0.4,0}
\definecolor{MyDarkBlue}{rgb}{0,0.08,0.50}
\definecolor{BrickRed}{rgb}{0.65,0.08,0}
\title[Green function of killed random walks in a cone]{Asymptotics of the Green function of killed random walks in a cone of $\Z^d$}
\author{Irina Ignatiouk-Robert}
\address{D\'epartement de math\'ematiques, CY Cergy Paris Université, 2 Avenue Adolphe Chauvin
95302 Cergy-Pontoise Cedex, France}
\email{irina.ignatiouk@cyu.fr}
\date{\today}
\begin{document}

\begin{abstract}
In this paper, we obtain the exact asymptotic behavior of Green functions of homogeneous random walks in $\Z^d$ killed at the first exit from and open cone of $\R^d$. Our approach combines methods of functional equations,  integral representations of the Green function and Woess' approach for the case of homogeneous random walks in $\Z^d$. 
\end{abstract}

\maketitle

\section{Introduction}
The asymptotic behavior of Green functions of a  transient Markov chain  $(Z(n))$ on an infinite countable state space is an important problem in probability theory.  Determining all  possible limits of the associated Martin kernel, the ratio of Green functions,  gives the {\em Martin compactification} of the state space, and in particular an integral representation of all non-negative harmonic functions, by the Poisson-Martin representation theorem, Theorem~(24.7) of ~\cite{Woess}. Additionally, an almost sure convergence  theorem,  Theorem~(24.10) of~\cite{Woess}, gives a description of how the transient Markov chain escapes to the infinity. For an introduction to the theory of Martin compactification for countable Markov chains, see the classical references Doob~\cite{Doob:02} and~\cite{Dynkin},   Sawyer~\cite{Sawyer},  and Chapter~IV of~\cite{Woess} for a thorough presentation of boundary theory of random walks. 

A  large number  of  results  in this  domain  has been already obtained  for homogeneous  random walks in $\Z^d$. In this setting, the exact asymptotics of the Green function has been obtained by Ney and Spitzer~\cite{Ney-Spitzer} by using the local central limit theorem. An alternative approach to this result is due to  Woess~\cite{Woess}. It is based on an integral representation of the Green function  and with Fourier analysis techniques. The Martin compactification of a  homogeneous random  walk  in the  lattice space $\Z^d$  is in this case  homeomorphic  to   the  closure  of  the  set
\[
\left\{  w  = {z}/{(1+|z|)}:z\in\Z^d\right\}\subset\R^d.
\]
 The exact asymptotics of the Green function has been obtained, and the Martin boundary identified, for more general homogeneous Markov chains such as random walks on free groups, hyperbolic graphs and Cartesian products. See~\cite{Woess}.

 For non homogeneous Markov chains, much less is known. For random walks on non-homogeneous trees the Martin boundary is obtained in Cartier~\cite{Cartier}.  Doney~\cite{Doney:02} identified the harmonic functions and the Martin boundary of a homogeneous random walk $(Z(n))$ on $\Z$ killed on the negative half-line $\{z: z{<}0\}$. For the space-time random walk $S(n){=}(n, Z(n))$ of a homogeneous random walk $Z(n)$ on $\Z$ killed on the negative half-line $\{z: z{<}0\}$ the Martin boundary is obtained in Alili and Doney~\cite{Alili-Doney}. The proof of these results relies on the one-dimensional structure of the  process.

 For two dimensional random walks in $\Z^2_+$ with reflection or with an absorbing boundary,  asymptotics of the Green function have been obtained in Kurkova and Malyshev~\cite{MalyshevKurk} and Kurkova and Raschel~\cite{Kurkova-Raschel} with complex analysis methods. To identify the full Martin boundary of killed or reflected random walks, methods of additive Markov processes have been used in~Ignatiouk and Loree~\cite{Ignatiouk-Loree} and also  in the references Ignatiouk~\cite{Ignatiouk:2008} and~\cite{Ignatiouk:2010} with additional arguments of large deviations arguments and Choquet-Deny theory. It should be mentioned that the approach of \cite{Ignatiouk:2008} and~\cite{Ignatiouk:2010} is valid for Markov-additive processes, i.e.  when transition probabilities are invariant with respect to translations in some directions. Ignatiouk et al.~\cite{I-K-R} obtains the Martin boundary of a class transient random walks in $\Z_+^2$ by combining methods of additive Markov processes and complex analysis.  For centered random walks in $\Z^d$, killed upon the first exit from some cone ${\cal C}{\subset}\R^d$  the exact asymptotics of the Green function  are obtained in Duraj et al.~\cite{Duraj-Raschel-Tarrago-Wachtel}. Their approach relies on the diffusion approximation of centered random walks. 

 In this paper we consider a random walk in $\Z^d$ killed at the first exit of a cone  ${\mathcal C}$ of $\R^d$,  its state space ${\cal E}{=}\Z^d{\cap}{\mathcal C}$, the transition probabilities of $(Z(n))$ are invariant with respect to the shifts in the interior of ${\mathcal C}$ and are associated to a probability measure $\mu$ on $\Z^d$. Our approach to investigate the asymptotic behavior of Green functions $(G_{\mathcal C}(j,k),j,k{\in}{\cal E})$ combines  the method of functional equations, an integral representation of the Green function and Woess' approach for the asymptotics of Green functions of homogeneous random walk in $\Z^d$. A general overview is given in Section~\ref{main-results}.

The approach of Ney and Spitzer~\cite{Ney-Spitzer} to get  the exact asymptotics of the Green function of homogeneous random walks in $\Z^d$ relies on estimates of local limit theorems and could be perhaps extended to the case of  the killed random walks  we consider.  This approach has been proposed in Borovkov and Mogulskii~\cite{Borovkov-Mogulskii-2001}.  This method should certainly work, probably at the price of serious technicalities. But, to the best of our knowledge, several technical proofs are lacking. 

We believe that our approach could be useful to investigate the asymptotics of the Green function for more general situation. For example, in the case when a random walk is homogeneous in some cone of the state space and the influence of its stochastic behavior outside of the cone is small. In ~\cite{Ignatiouk-2023-quadrant}, 
this approach is used to identify the asymptotic behavior of the Green functions for  random walks with reflected boundary conditions in $\Z^2_+$. 

\section{Main results}\label{main-results}

Consider  a non-empty  open  cone ${\mathcal C}$ in $\R^d$ having a vertex at the origin $0\in\R^d$, a homogeneous random walk $(Z(n))$ on the lattice $\Z^d$ with transition probabilities 
\[
\P_j(Z(1)=k) = \mu(k-j), \quad j,k\in\Z^d,
\]
where $\mu$ is  a probability measure  on $\Z^d$, and the first time when the process $(Z(n))$ exits from ${\mathcal C}$~:  
\[
\tau = \inf\{n:~Z(n)\not\in{\mathcal C}\}. 
\]

We assume that the following conditions are satisfied :~

\medskip

\noindent
{\em (A1):~ 
\begin{itemize} 
\item[(i)] The random walk $(Z(n))$ is irreducible in $\Z^d$.
\item[(ii)] The jump generating function 
\[
P(\alpha) \dot= \sum_{k=(k_1,\ldots, k_d)\in\Z^d} \exp(\alpha\cdot k) \mu(k) 
\]
is finite in a neighborhood of the set $D \dot= \{\alpha\in\R^d~:~P(\alpha) \leq 1\}$. 
\item[(iii)] The mean jump $\E_0(Z(1))$ is non zero, or equivalently, that the interior $\inter{D}$ of the set $D$ is non-empty.

\end{itemize} 
}
\noindent 
$\alpha\cdot k$ denotes here and throughout the paper, the usual scalar product in $\R^d$. 

\medskip 

\noindent
{\em (A2):~ The random walk $(Z(n))$ is irreducible on ${\cal E}{=}\Z^d\cap{\mathcal C}$, i.e. that for any $k,m\in {\cal E}$, there is $n\in\N^*$ such that 
\[
\P_k(Z(n) = m, \; \tau > n) > 0. 
\]
}
Remark that the cone ${\mathcal C}$ is not supposed to be convex. For instance, all our results are valid for ${\mathcal C}= \R^d\setminus\R^d_+$.

The purpose of the present paper is to determine the exact asymptotic of the Green function 
\[
G_{\mathcal C}(k,m) = \sum_{n=0}^\infty \P_k(Z(n) = m, \; \tau > n), \quad k,m\in{\cal E}, 
\]
as $m$ tends to  infinity along a direction $u$,  for each $u\in {\mathcal C}$.

To formulate our results the following notations are needed:~ Recall that under the hypotheses (A1), the set $D$ is strictly convex and compact (see \cite{Ney-Spitzer}) and that the mapping 
\be\label{homeomorphism-boubdary-sphere} 
\alpha \to \nabla P(\alpha)/\|\nabla P(\alpha)\| 
\ee
determines a homeomorphism from the boundary $\partial D$ of the set $D$ to the unit sphere ${\mathbb S}^{d-1}$ in $\R^d$. 

\begin{defi}\label{def-homeomorphism-sphere-boundary} 1) We denote by $u \to  \alpha(u)$ the inverse mapping to \eqref{homeomorphism-boubdary-sphere} and we let for $u\in{\mathbb S}^{d-1}$, $r(u) = (e^{\alpha_1(u)}, \ldots, e^{\alpha_d(u)})$. \\
2) For $u\in {\mathbb S}^{d-1}$, we consider 
\begin{itemize}
\item[--] the twisted random walk $(Z^u(n))$ with transition probabilities 
\be\label{twisted-rw}
\P_k(Z^u(1) = m) = \exp(\alpha(u)\cdot (m-k)) \mu(m-k), \quad k,m\in\Z^d, 
\ee
\item[--] the mean jump vector ${\mathfrak m}(u) = \left.\nabla P(\alpha)\right|_{\alpha=\alpha(u)} = \E_0(Z^u(1))$ of  the twisted random walk $(Z^u(n))$ and the matrix of the second moments 
\[
{\mathcal Q}(u) = \bigl( {\mathcal Q}_{i,j}(u) \bigr)_{i,j=1}^d, 
\]
with  
\[
{\mathcal Q}_{i,j}(u) = \sum_{k\in\Z^d} k_i\, k_j \,\exp(\alpha(u)\cdot k) \mu(k), \quad i,j\in \{1,\ldots, d\}; 
\]
\item[--] the rotation $R_u$ in $\R^d$ that sends the vector $u$ to the first vector $e_1$ of the canonical basis of $\R^d$ and leaves the orthogonal complement to $\{u, e_1\}$ invariant; 
\item[--] and the $(d-1)\times(d-1)$ matrix $Q_u=\left(Q_u(i,j)\right)_{i,j=2}^d$ obtained from the matrix $R_u{\mathcal Q}(u)R_u^{t}$ by deleting the first row and column~:  
\[
Q_u(i,j) = \sum_{k\in\Z^d} [R_uk]_i \, [R_u k]_j \, \exp(\alpha(u)\cdot k) \mu(k), \quad i,j\in\{2,\ldots, d\}, 
\]
where for $i\in\{1,\ldots, d\}$, $[R_uk]_i$ denotes the $i$-th coordinate of the vector $R_uk$ in the canonical basis of $\R^d$. 
\item[--] for a non-zero $m\in\Z^d$, we let $u_m = m/\|m\|$.
\end{itemize} 
\end{defi}

Under the above assumptions, for the random walk $(Z(n))$ killed when leaving the cone ${\mathcal C}$, the results of  Duraj~\cite{Duraj} provide a collection of strictly positive harmonic functions. Recall that a non-negative function $h$ on ${\cal E}$ is harmonic for the random walk $(Z(n))$ killed when leaving the cone ${\mathcal C}$, if for any $j\in {\cal E}$,
\[
\E_j(h(Z(1)), \; \tau > 1) = h(j). 
\]
In his Proposition~1.1, Duraj~\cite{Duraj} proved that for any point $\alpha$ on the boundary $\partial D$ of the set $D$, for which the gradient $\nabla P(\alpha)$ of the function $P$ at $\alpha$ belongs to the cone ${\mathcal C}$, the function $h_\alpha$ defined on ${\cal E}$ by 
\[
h_\alpha(k) = \exp(\alpha\cdot k) - \E_k\bigl(\exp(\alpha\cdot Z(\tau)), \; \tau <+\infty \bigr), \quad \forall k\in {\cal E}, 
\]
is strictly positive on ${\cal E}$ and harmonic for the random walk $(Z(n))$ killed when leaving the cone ${\mathcal C}$. Remark that the results of \cite{Duraj} were obtained for a convex cone satisfying some additional condition, but in our setting, one can extend this results for a general open, and not necessarily convex, cone ${\mathcal C}$ ( see Lemma~\ref{Duraj-lemma} below).

The main result of our paper is the following statement 

\begin{theorem}\label{main-result}  Under the hypotheses (A1) and (A2), for any $k,m\in{\cal E}$ and $u\in {\mathbb S}^{d-1}\cap {\mathcal C}$, as $\|m\|\to\infty$ and $u_m = m/\|m\|\to u$, 
\be\label{eq-main-result}
G_{\mathcal C}(k,m) ~\sim~ h_{\alpha(u_m)}(k) \|{\mathfrak m}(u_m) \|^{-1} \sqrt{\det(Q(u_m))}\bigl(2\pi\|m\|)^{-(d-1)/2} \exp(\alpha(u_m)\cdot m) 
\ee
\end{theorem}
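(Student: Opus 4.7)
The strategy is to transfer the killed-walk problem to the homogeneous twisted walk $(Z^u(n))$ of Definition~\ref{def-homeomorphism-sphere-boundary} (taken at $u = u_m$), then invoke the classical Ney--Spitzer/Woess asymptotics and identify the harmonic factor $h_{\alpha(u)}(k)$ with the escape probability of $Z^u$ from $\mathcal{C}$. The twisted walk has mean jump $\mathfrak{m}(u) = \|\nabla P(\alpha(u))\|\,u \in \mathcal{C}$, so it is ballistic into the cone and transient with strictly positive probability of never leaving $\mathcal{C}$; this is what makes the reduction work.

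Since $P(\alpha(u)) = 1$, the pathwise change-of-measure identity
\[
\P^u_k(Z^u(0)=k_0,\ldots,Z^u(n)=k_n) = \exp(\alpha(u)\cdot(k_n - k_0))\,\P_k(Z(0)=k_0,\ldots,Z(n)=k_n),
\]
summed over paths and times, yields $G^u_{\mathcal{C}}(k,m) = \exp(\alpha(u)\cdot(m-k))\,G_{\mathcal{C}}(k,m)$, converting the exponentially-decaying asymptotics of $G_{\mathcal{C}}$ into polynomially-decaying asymptotics of $G^u_{\mathcal{C}}$. Decompose the unkilled Green function $G^u$ of $Z^u$ on $\Z^d$ by the strong Markov property at $\tau$:
\[
G^u(k,m) = G^u_{\mathcal{C}}(k,m) + \E^u_k\bigl[G^u(Z^u(\tau),m);\,\tau<\infty\bigr],
\]
and apply the Ney--Spitzer/Woess theorem to $Z^u$ moving in its own drift direction $u$: uniformly for $j$ in compact sets and as $\|m\|\to\infty$, $u_m\to u$,
\[
G^u(j,m) \sim F(m),
\]
where $F(m)$ is the polynomial prefactor of the theorem (no exponential, because $u$ is already the drift direction of $Z^u$, so the Ney--Spitzer ``$\alpha$'' for $Z^u$ at direction $u$ is $0$). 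Provided one can pass the limit inside the expectation (the main obstacle, see below), this gives $G^u_{\mathcal{C}}(k,m)\sim F(m)\,\P^u_k(\tau=\infty)$.

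The escape probability is identified with Duraj's harmonic function by optional stopping applied to the non-negative $\P_k$-martingale $\exp(\alpha(u)\cdot(Z(n\wedge\tau)-k))$, which has mean $1$ because $P(\alpha(u))=1$:
\[
\P^u_k(\tau<\infty) = \E_k\bigl[\exp(\alpha(u)\cdot(Z(\tau)-k));\,\tau<\infty\bigr] = 1 - e^{-\alpha(u)\cdot k}\,h_{\alpha(u)}(k),
\]
so $\P^u_k(\tau=\infty) = e^{-\alpha(u)\cdot k}\,h_{\alpha(u)}(k)$. Combining this with the change-of-measure identity, the two factors $e^{\pm\alpha(u)\cdot k}$ cancel and the remaining exponential in $m$ combines with $F(m)$ to give the asymptotics stated in the theorem.

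The main analytic obstacle is the limit interchange in the boundary term $\E^u_k[G^u(Z^u(\tau),m)/F(m);\,\tau<\infty]\to\P^u_k(\tau<\infty)$. Since $Z^u(\tau)$ is supported outside $\mathcal{C}$ and its support is generally unbounded, pointwise convergence $G^u(j,m)/F(m)\to 1$ for fixed $j$ is insufficient; one needs a dominating function $\Psi(j)$ with $\E^u_k\Psi(Z^u(\tau))<\infty$ together with uniform convergence rates on ranges of $j$ growing with $\|m\|$. This is precisely where I expect the analytic apparatus announced in the introduction -- functional equations, explicit integral representations and Woess-style contour analysis -- to do the heavy lifting: by recasting $G^u_{\mathcal{C}}(k,m)$ as a contour integral via a functional equation that encodes the boundary contribution, a saddle-point analysis à la Woess yields the leading asymptotics together with the required uniform estimates in a single stroke, bypassing any \emph{a priori} tail control on $Z^u(\tau)$.
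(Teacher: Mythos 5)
Your proposal is a genuine probabilistic reformulation of the argument, but it contains a gap that you identify yourself and then explicitly defer rather than fill, so as written it is an outline rather than a proof. The change of measure $G^u_{\mathcal C}(k,m)=e^{\alpha(u)\cdot(m-k)}G_{\mathcal C}(k,m)$ is the probabilistic counterpart of the paper's shift of the Cauchy contour from a generic $r\in\inter{\mathfrak D}$ to $r(u)$, the Markov decomposition $G^u(k,m)=G^u_{\mathcal C}(k,m)+\E^u_k[G^u(Z^u(\tau),m);\tau<\infty]$ is the same identity the paper uses to derive the functional equation \eqref{functional-equation}, and the optional-stopping identification $\P^u_k(\tau=\infty)=e^{-\alpha(u)\cdot k}h_{\alpha(u)}(k)$ is Duraj's Lemma~3.1 as invoked in the paper's Lemma~\ref{Duraj-lemma}. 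So the ingredients match; the route through them is different.

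The genuine gap is the one you flag: passing the limit inside $\E^u_k[G^u(Z^u(\tau),m)/F(m);\tau<\infty]$. Pointwise convergence $G^u(j,m)/F(m)\to1$ from Ney--Spitzer holds only uniformly for $j$ in ranges $o(\|m\|)$, and for exit positions $j$ at distance comparable to $\|m\|$ the direction from $j$ to $m$ drifts away from $u$, so the ratio need not converge to $1$ and can be large. A domination argument therefore requires two things that your sketch does not supply: (a) a uniform-in-$m$ bound $G^u(j,m)/F(m)\le \Phi(j)$ with $\E^u_k\Phi(Z^u(\tau))<\infty$, and (b) quantitative control of the Ney--Spitzer error uniformly over a growing range of $j$. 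Item (a) requires exactly the content of the paper's Lemma~\ref{analytic-continuation-lemma}, namely that $F_k(x)=\E_k[x^{Z(\tau)};\tau<\infty]$ is finite (hence analytic) in a full neighborhood of $r(u)$, which after the change of measure reads $\E^u_k[e^{\beta\cdot Z^u(\tau)};\tau<\infty]<\infty$ for $\beta$ near $0$; you assert that the "analytic apparatus" will provide this but do not prove it. Item (b) is also not addressed, and is nontrivial because $G^u(j,m)$ for $\|j\|\asymp\|m\|$ involves the Ney--Spitzer exponent at a direction different from $u$. The paper avoids both difficulties by never splitting $G_{\mathcal C}$ into an unkilled part and a boundary expectation at the level of asymptotics: it keeps everything inside a single Cauchy integral whose integrand contains $x^k-F_k(x)$, and the saddle-point analysis of Lemma~\ref{lemma-Woess} produces the asymptotics in one stroke, with Lemma~\ref{analytic-continuation-lemma} entering only to justify the contour shift.

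Two smaller points. First, your assertion that the twisted walk escapes $\mathcal C$ with positive probability ("transient with strictly positive probability of never leaving $\mathcal C$") is stated as if it followed from $\mathfrak m(u)\in\mathcal C$ alone, but strict positivity of $\P_k(\tau_u(\mathcal C)=\infty)$ for \emph{all} $k\in\cal E$ requires Duraj's argument together with irreducibility and, for nonconvex cones, the extension carried out in the paper's Lemma~\ref{Duraj-lemma}; you should cite or reproduce that. Second, you assume the Ney--Spitzer asymptotics for the twisted walk apply directly under (A1); for that one needs to check that $(Z^u(n))$ itself satisfies the moment and irreducibility hypotheses, which is true here since $P^u(\beta)=P(\alpha(u)+\beta)$ inherits the finiteness of $P$ near $D$, but it is worth one line.
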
 
As a straightforward consequence of this result, one gets 

\begin{cor}\label{martin-boundary-cor} Under the hypotheses (A1) and (A2), for any  $u\in{\mathbb S}^{d-1}\cap  {\mathcal C}$, and any sequence of points $m_n\in{\cal E}$ with $\lim \|m_n\| = +\infty$ and $\lim m_n/\|m_n\| = u$, 
\[
\lim_{n\to +\infty} \frac{G_{\mathcal C}(k,m_n)}{G_{\mathcal C}(k_0,m_n)} = \frac{h_{\alpha(u)}(k)}{h_{\alpha(u)}(k_0)}
\]
\end{cor}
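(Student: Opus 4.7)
The plan is to deduce this corollary directly from Theorem~\ref{main-result} by taking ratios. Applying the asymptotic equivalence~\eqref{eq-main-result} separately to the numerator $G_{\mathcal C}(k,m_n)$ and to the denominator $G_{\mathcal C}(k_0,m_n)$, I observe that in the ratio all factors except $h_{\alpha(u_m)}(\cdot)$---namely $\|\mathfrak m(u_m)\|^{-1}$, $\sqrt{\det(Q(u_m))}$, $(2\pi\|m\|)^{-(d-1)/2}$ and $\exp(\alpha(u_m)\cdot m)$---are independent of the starting point of the random walk and therefore cancel. This reduces the problem to showing
\[
\lim_{n\to\infty}\frac{h_{\alpha(u_{m_n})}(k)}{h_{\alpha(u_{m_n})}(k_0)} \;=\; \frac{h_{\alpha(u)}(k)}{h_{\alpha(u)}(k_0)}.
\]

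To identify this limit, I use that by Definition~\ref{def-homeomorphism-sphere-boundary} the mapping $u\mapsto\alpha(u)$ is the continuous inverse of~\eqref{homeomorphism-boubdary-sphere}, so the hypothesis $m_n/\|m_n\|\to u$ gives $\alpha(u_{m_n})\to\alpha(u)$. It then suffices to check that for each fixed $k\in{\cal E}$ the map $\alpha\mapsto h_\alpha(k)=\exp(\alpha\cdot k)-\E_k\bigl(\exp(\alpha\cdot Z(\tau)),\,\tau<+\infty\bigr)$ is continuous at $\alpha(u)$. The first term is obviously continuous; for the second term I would invoke dominated convergence, justified by the fact that for $\alpha\in D$ the process $\exp(\alpha\cdot Z(n\wedge\tau))$ is a non-negative supermartingale, giving the uniform bound $\E_k\bigl(\exp(\alpha\cdot Z(\tau))\1_{\tau<+\infty}\bigr)\le \exp(\alpha\cdot k)$, which in turn is uniformly bounded for $\alpha$ ranging over a small compact neighborhood of $\alpha(u)$ inside the domain of finiteness of $P$ provided by assumption (A1)(ii). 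The strict positivity of $h_{\alpha(u)}(k_0)$ on ${\cal E}$, guaranteed by Duraj's proposition recalled before Theorem~\ref{main-result} (together with Lemma~\ref{Duraj-lemma} to cover general non-convex cones), ensures that the limit is well-defined.

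Beyond this routine limit-exchange, no additional ingredient is needed, and the only mildly delicate point is the continuity statement for $\alpha\mapsto h_\alpha(k)$ at boundary points of $D$; everything else is bookkeeping once Theorem~\ref{main-result} is in hand.
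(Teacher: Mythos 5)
Your overall architecture is the right one and matches what the paper intends when it calls the corollary ``a straightforward consequence'' of Theorem~\ref{main-result}: take the ratio of the two asymptotics, cancel the common factors (which only depend on $m_n$), and reduce the question to the continuity of $\alpha\mapsto h_\alpha(k)$ at $\alpha(u)$ together with the strict positivity of $h_{\alpha(u)}(k_0)$ (Lemma~\ref{Duraj-lemma}).

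The weak point is the justification of continuity of $\alpha\mapsto\E_k\bigl(\exp(\alpha\cdot Z(\tau)),\,\tau<+\infty\bigr)$. The supermartingale inequality
$\E_k\bigl(\exp(\alpha\cdot Z(\tau))\1_{\tau<+\infty}\bigr)\le \exp(\alpha\cdot k)$
is a bound on the \emph{expectation}, not a pointwise dominating function for the integrand $\exp(\alpha\cdot Z(\tau))$ as $\alpha$ varies; a uniform bound on expectations does not license dominated convergence (nor does it give uniform integrability without some extra moment control). What actually provides the needed domination is Lemma~\ref{analytic-continuation-lemma}: it shows that the generating series $F_k(x)=\sum_{m\in\Z^d\setminus{\mathcal C}}\P_k(Z(\tau)=m,\,\tau<\infty)\,x^m$ converges on an open neighborhood $V_u$ of $r(u)$. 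From this one gets a genuine integrable majorant --- e.g.\ $g(m)=\sum_{\epsilon\in\{-1,1\}^d}e^{(\alpha(u)+\delta\epsilon)\cdot m}$ for $\delta$ small enough that the corners $\alpha(u)+\delta\epsilon$ stay inside $\log V_u$ --- and then $\exp(\alpha\cdot m)\le g(m)$ for all $\alpha$ in the cube of radius $\delta$ around $\alpha(u)$, so dominated convergence applies. Equivalently, Lemma~\ref{analytic-continuation-lemma} already gives analyticity (hence continuity) of $x\mapsto F_k(x)$ near $r(u)$, and composing with the continuous map $u\mapsto r(u)$ yields what you need directly. So your proof is correct modulo replacing the supermartingale appeal by an appeal to Lemma~\ref{analytic-continuation-lemma}; without that lemma the continuity claim at a boundary point of $D$ is not justified.
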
 
According to the definition of the Martin boundary, this proves that for any $u\in {\mathbb S}^{d-1}\cap {\mathcal C}$, any sequence of points $m_n\in{\cal E}$ with $\lim \|m_n\| = +\infty$ and $\lim m_n/\|m_n\| = u$, converges to some point $\eta_u$ of the Martin boundary $\partial_M{\cal E}$ of the random walk $(Z(n))$ killed upon the time $\tau$, and that the mapping $u\to \eta_u$ is continuous and one to one from ${\mathbb S}^{d-1}\cap{\mathcal C}$ to $\{\eta_u~:~ u\in  {\mathbb S}^{d-1}\cap{\mathcal C}\}\subset \partial_M{\cal E}$. The last result was obtained earlier in the paper \cite{Ignatiouk:2020} for a convex cone ${\mathcal C}$. In the present paper, the convexity of the cone ${\mathcal C}$ is not assumed.

Throughout the paper, to simplify the notations, for $x{\in}\C^d$  and $k{\in}\Z^d$, we denote $x^{k}{=}x_1^{k_1}\cdots x_d^{k_d}$ and $k{+}1{=}(k_1{+}1,\ldots,k_d{+}1)$.

\medskip 

The main idea of the proof of Theorem~\ref{main-result} is the following~: 
We first show that for any $k\in{\cal E}$, the  functions 
\[
x\to  {\mathcal P}(x) = \sum_{m\in\Z^d} x^m \mu(m). 
 \]
 \be\label{eq-1-P}
x\to 1/(1-{\mathcal P}(x)), 
 \ee
 \be\label{eq-H}
x\to H_k(x) = \sum_{m\in{\cal E}} G_{\mathcal C}(k,m) x^m, 
\ee
and 
\begin{align}\label{eq-F} 
x\to F_k(x) &=  \E_k\bigl(x^{Z(\tau)}, \; \tau <+\infty \bigr) \\ &= \sum_{m\in\Z^d\setminus{\mathcal C}} \P_k(Z(\tau)=m,\; \tau <+\infty) x^m 
\end{align}
are analytic in the multicircular set 
 \be\label{multicircular-set}
  \{x=(x_1,\ldots, x_d)\in \C^d:~ {\mathcal P}(|x_1|, \ldots, |x_d|) < 1\},
 \ee
 and satisfy there the identity 
\be\label{functional-equation} 
H_k(x) = \frac{x^k-F_k(x)}{1-{\mathcal P}(x)}. 
\ee
This is a subject of Lemma~\ref{functional-equation-lemma}  and Corollary~\ref{integral-representation-lemma1} below.  With this result, for any point $r=(r_1,\ldots,r_d)$, such that ${\mathcal P}(r) < 1$, and $k,m\in\Z^d$, we get the integral representation 
 \be\label{eq-integral-representation}
 G_{\mathcal C}(k,m)  = \frac{1}{(2\pi i)^d} \int_{|x_1|= r_1}\ldots \int_{|x_d|=r_d} \frac{x^{k} - F_k(x)}{x^{m+1}(1-{\mathcal P}(x))} \, dx_1\ldots dx_d.
 \ee
Next, to investigate the asymptotic behavior of the Green function $G_{\mathcal C}(k, m)$ as $\|m\|\to \infty$ and $m/\|m\|\to u$  for a given $u\in{\mathbb S}^{d-1}\cap {\mathcal C}$, we show that for some neighborhood $V(r(u))$ of the point $r(u) = (e^{\alpha_{1}(u)},\ldots, e^{\alpha_{d}(u)})$ in $\R^d$,  the function $F_k$ can be extended as an  analytic function to the multicircular set $$\{x=(x_1,\ldots,x_d)\in \C^d:~ (|x_1|,\ldots, |x_d|)\in V(r(u))\}$$ (this is a subject of Lemma~\ref{analytic-continuation-lemma}) and we move the point $r$ in the integral representation \eqref{eq-integral-representation} to the  point $r(u) = (e^{\alpha_1(u)}, \ldots, e^{\alpha_d(u)})$. In this way,  the following integral representation of $G_{\mathcal C}(k,m)$ is obtained: 
\be\label{eq-integral-representation2}
 G_{\mathcal C}(k,m)  = e^{-\alpha(u)\cdot m} \int_{[-\pi,\pi]^d} \frac{\psi_u(s)}{e^{s\cdot m} (1-\phi_u(s))} \, ds_1\ldots ds_d
 \ee
where 
\[
\phi_u(s) = {\mathcal P}(e^{(\alpha(u) + i s_1)k_1},\ldots, e^{\alpha_d(u) + i s_d)k_d}) = P(\alpha_1(u) + is_1,\ldots, \alpha_d(u) + i s_d). 
\]
and 
\be\label{eq-psi-u,k}
\psi_{u,k}(s) = e^{\alpha(u)\cdot k + i s\cdot k}  - F_k(e^{(\alpha_i(u)+i s_1)k_1},\ldots, e^{(\alpha_d(u) + i s_d) k_d}).
\ee
Such an integral representation of $G_{\mathcal C}(k,m)$ is quite similar to those of the Green function 
\[
G(k,m) = \sum_{n=0}^\infty \P_k(Z(n) = m)
\] 
of the homogeneous random walk $(Z(n))$ obtained  in the proof of Theorem~25.15 of \cite{Woess}: the only difference  is that for the homogeneous random walk, one has $\psi_{u,k}(s) = e^{\alpha(u)\cdot k + i s\cdot k}$ instead of \eqref{eq-psi-u,k}.  The natural idea is therefore to investigate the asymptotic behavior of the integral \eqref{eq-integral-representation2}  as $\|m\|\to \infty$ and $m/\|m\|\to {u}$, by using the method of Woess~\cite{Woess}. This is a subject of Lemma~\ref{lemma-Woess}.  It should be mentioned however that in  Theorem~25.15 of \cite{Woess}, the jumps of the random walk $(Z(n))$ were assumed bounded, and consequently the function $\alpha \to P(\alpha)$ was analytic everywhere in $\C^d$, while in our case, this function is analytic only in a neighborhood of the set $$\{(\alpha_1, \ldots, \alpha_d)\in\C^d :  P(\Re{e}(\alpha_1),\ldots, \Re{e}(\alpha_d)) \leq 1\}.$$ Moreover, in our case, for any $k\in{\cal E}$, the function $(u,s)\to \psi_{u,k}(s)$ is defined only  when $u\in{\mathbb S}^{d-1}\cap {\cal C}$, and  we can be sure only that for a given $u\in{\mathbb S}^{d-1}\cap {\cal C}$, the function $s\to \psi_{u,k}(s)$ is analytic only in a  set $$\{s=(s_1,\ldots, s_d)\in\C^d:~ \max\{[\Im(s_1)|, \ldots, |\Im(s_d)|\} < \eps_u\}$$ for some $\eps_u > 0$ depending on $u$. 
Hence, in our case, we should extend the arguments of the proof of Theorem~25.15 of \cite{Woess} for the case of unbounded jumps, and moreover, we will  be able to get the asymptotics \eqref{eq-main-result} uniformly with respect to $u$ only on the compact subsets of ${\mathbb S}^{d-1}\cap{\mathcal C}$ and not on the whole set ${\mathbb S}^{d-1}\cap{\mathcal C}$.

\section{Proof of Theorem~\ref{main-result}}
 Throughout our proof, the following notations will be used:~
 We let 
 \begin{align*}
{\mathfrak D} &= \{(x=(x_1,\ldots,x_d)\in ]0,+\infty[^d~:~ {\mathcal P}(x_1,\ldots, x_d) \leq 1\} \\ &= \{(x=(x_1,\ldots,x_d)\in ]0,+\infty[^d~:~ (\ln (x_1),\ldots, \ln(x_d))\in D\}.
 \end{align*}
 and we denote  by  $\inter{\mathfrak D}$ the interior of the set ${\mathfrak D}$ in $\R^d$. For a given set $V\subset]0,+\infty[^d$, we let $$\Omega(V)=\{x=(x_1,\ldots, x_d)\in \C^d:~ (|x_1|,\ldots, |x_d|)\in V\}.$$

 \begin{lemma}\label{functional-equation-lemma} Under the hypotheses (A1), for any $k\in{\cal E}$, the functions \eqref{eq-H}, \eqref{eq-F} and \eqref{eq-1-P} are analytic in the multicircular set $\Omega(\inter{\mathfrak D})$, and satisfy there the functional equation \eqref{functional-equation}.
\end{lemma}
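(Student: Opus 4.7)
\medskip

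\noindent\textbf{Proof proposal.} The argument splits naturally into three parts: absolute convergence of each series on the polycircle $\Omega(\inter{\mathfrak D})$, analyticity, and the functional equation.

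\smallskip

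First I would handle $\mathcal{P}$ and $1/(1-\mathcal{P})$. Since $\mu$ is a probability measure and $\mathcal{P}(|x_1|,\ldots,|x_d|)\leq 1$ for $|x|\in\mathfrak{D}$, the power series defining $\mathcal{P}$ converges absolutely on $\Omega(\mathfrak{D})$, so $\mathcal{P}$ is analytic there. On $\Omega(\inter{\mathfrak D})$ one has the strict inequality $|\mathcal{P}(x)|\leq\mathcal{P}(|x_1|,\ldots,|x_d|)<1$, so $1-\mathcal{P}$ has no zero and $1/(1-\mathcal{P})$ is analytic.

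\smallskip

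The key input for $F_k$ and $H_k$ is the exponential martingale
\[
M_n \;=\; \frac{\exp(\alpha\cdot Z(n))}{P(\alpha)^n}, \qquad \alpha=(\ln|x_1|,\ldots,\ln|x_d|)\in\inter{D},
\]
under $\P_k$. By optional stopping applied at the bounded time $\tau\wedge n$ and $P(\alpha)\leq 1$,
\[
\E_k\bigl(e^{\alpha\cdot Z(\tau\wedge n)}\bigr) \;\leq\; \E_k\bigl(M_{\tau\wedge n}\bigr) \;=\; e^{\alpha\cdot k},
\]
and monotone convergence yields
\[
\E_k\bigl(e^{\alpha\cdot Z(\tau)},\,\tau<\infty\bigr) \;\leq\; e^{\alpha\cdot k}.
\]
This bounds $F_k(|x|)\leq|x|^k$, proving absolute and normal convergence of the series defining $F_k$ on $\Omega(\inter{\mathfrak D})$, hence analyticity. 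For $H_k$, a similar estimate: $\E_k(e^{\alpha\cdot Z(n)},\,\tau>n)=P(\alpha)^n\E_k(M_n,\,\tau>n)\leq P(\alpha)^n e^{\alpha\cdot k}$, so
\[
H_k(|x|) \;=\; \sum_{n\geq 0}\E_k\bigl(e^{\alpha\cdot Z(n)},\,\tau>n\bigr) \;\leq\; \frac{|x|^k}{1-\mathcal{P}(|x|)} \;<\;\infty
\]
on $\Omega(\inter{\mathfrak D})$, which gives analyticity of $H_k$.

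\smallskip

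Finally, for the functional equation, both series $H_k(x)$ and $\mathcal{P}(x)$ converge absolutely on $\Omega(\inter{\mathfrak D})$, so Fubini justifies
\[
H_k(x)\mathcal{P}(x) \;=\; \sum_{\ell\in\Z^d} x^{\ell}\!\!\sum_{m\in\mathcal{E}}G_{\mathcal{C}}(k,m)\mu(\ell-m).
\]
The one-step Markov property rewrites the inner sum as $\sum_{n\geq 0}\P_k(Z(n+1)=\ell,\,\tau>n)$, and splitting the event $\{\tau>n\}=\{\tau>n+1\}\sqcup\{\tau=n+1\}$ gives, using $Z(\tau)\notin\mathcal{C}$ and $Z(n)\in\mathcal{C}$ for $n<\tau$,
\[
\sum_{n\geq 0}\P_k(Z(n{+}1){=}\ell,\,\tau{>}n) \;=\;
\begin{cases}
G_{\mathcal{C}}(k,\ell)-\mathbf{1}_{\ell=k} & \text{if }\ell\in\mathcal{E},\\
\P_k(Z(\tau){=}\ell,\,\tau<\infty) & \text{if }\ell\notin\mathcal{E}.
\end{cases}
\]
Summing yields $H_k(x)\mathcal{P}(x)=H_k(x)-x^k+F_k(x)$, which is exactly \eqref{functional-equation}.

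\smallskip

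The only genuinely nontrivial step is controlling $F_k$ on the open polydisc: without the exponential martingale and optional stopping one has no a priori reason for $\E_k(|x|^{Z(\tau)},\,\tau<\infty)$ to be finite, since the jumps of $Z$ are unbounded and $Z(\tau)$ can lie arbitrarily far outside $\mathcal{C}$. Everything else is a bookkeeping of absolute convergence plus one application of the Markov property.
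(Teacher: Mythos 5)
Your proof is correct, but it differs from the paper's in two genuinely independent places, and one of your concluding remarks is slightly misleading.

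For the analyticity of $F_k$ and $H_k$, you invoke the exponential martingale $M_n = e^{\alpha\cdot Z(n)}/P(\alpha)^n$ and optional stopping to obtain $F_k(|x|)\leq|x|^k$ and $H_k(|x|)\leq|x|^k/(1-\mathcal{P}(|x|))$. The paper avoids martingales entirely: it simply observes that both $G_{\mathcal C}(k,m)$ and $\P_k(Z(\tau)=m,\tau<\infty)$ are dominated termwise by the free Green function $G(k,m)=\sum_n\P_k(Z(n)=m)$, whose generating function is $x^k/(1-\mathcal{P}(x))$ by Fubini--Tonelli and geometric summation on $\Omega(\inter{\mathfrak D})$. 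That domination argument is shorter and more elementary, although your martingale bound is sharper (it removes the factor $1/(1-\mathcal{P}(|x|))$ on $F_k$). This makes your closing claim --- that without optional stopping one has ``no a priori reason'' for $\E_k(|x|^{Z(\tau)},\tau<\infty)$ to be finite --- incorrect: the comparison with the free Green function is exactly such a reason, and it is the route the paper takes.

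For the functional equation, you compute $H_k(x)\mathcal{P}(x)$ directly and use a one-step decomposition of $\{\tau>n\}=\{\tau>n+1\}\sqcup\{\tau=n+1\}$, arriving at $H_k\mathcal{P}=H_k-x^k+F_k$. The paper instead uses the first-passage (strong Markov) decomposition of the free Green function,
\[
\sum_{n\geq 0}\P_k(Z(n)=m)=\sum_{\ell\in\Z^d\setminus\mathcal{C}}\P_k(Z(\tau)=\ell,\tau<\infty)\sum_{n\geq 0}\P_\ell(Z(n)=m)+G_{\mathcal C}(k,m),
\]
which upon multiplying by $x^m$, summing, and using the already-computed generating function of $G$ gives $x^k/(1-\mathcal{P})=F_k/(1-\mathcal{P})+H_k$. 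Both derivations are valid; yours is a resolvent-type one-step identity $(I-\mathcal{P})H_k=x^k-F_k$, while the paper's is the dual first-passage identity. They are essentially transposes of each other and of comparable length. The bookkeeping you do on the inner sum (the case split $\ell\in\mathcal{E}$ vs.\ $\ell\notin\mathcal{E}$, using $\tau\geq 1$ to produce the $-x^k$ term) is correct.

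One small imprecision worth noting: you justify analyticity of $\mathcal{P}$ on $\Omega(\mathfrak D)$ by its absolute convergence there, but a Laurent series converging on a closed Reinhardt set is a priori only analytic on the interior; the reason analyticity extends to (a neighborhood of) $\Omega(\mathfrak D)$ is Assumption (A1)(ii), which posits finiteness of $P$ on a neighborhood of $D$. Since the lemma only asserts analyticity on the open set $\Omega(\inter{\mathfrak D})$, this does not affect the result, but the hypothesis should be cited explicitly.
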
 
\begin{proof} To show that the function $x\to 1/(1-{\mathcal P}(x))$ is analytic in  the multicircular set \eqref{multicircular-set} it is sufficient to notice that 
because of Assumption (A1)(i), the function ${\mathcal P}$ is analytic in a neighborhood of the set  $\Omega({\mathfrak D})$, and that throughout the set $\Omega(\inter{\mathfrak D})$, one has 
\[
|{\mathcal P}(x_1,\ldots,x_d)| \leq {\mathcal P}( |x_1|, \ldots, |x_d|) < 1.
\]
Remark furthermore that  by the Fubini-Tonelli theorem,  for any $x\in \inter{\mathfrak D}$, 
 \be\label{eq1-proof-lemma-functional-equation} 
 \sum_{m\in\Z^d} \sum_{n=0}^\infty \P_k(Z(n) = m) x^m = \sum_{n=0}^\infty x^k{\mathcal P}(x)^n = \frac{x^k}{1-{\mathcal P}(x)}.
\ee
Since  for any $k,m\in\Z^d$, 
 \[
 G_{\mathcal C}(k,m) \leq \sum_{n=0}^\infty \P_k(Z(n) = m)
 \]
and 
\[
\P_k(X(\tau) = m, \; \tau<+\infty) \leq \sum_{n=0}^\infty \P_k(Z(n) = m), 
\]
from this it follows that the series 
\[
F_k(x) =  \E_k\bigl(x^{Z(\tau)}, \; \tau <+\infty \bigr) = \sum_{m\in\Z^d\setminus{\mathcal C}} \P_k(Z(\tau)=m,\; \tau <+\infty) x^m 
\]
and 
\[
H_k(x) = \sum_{m\in{\cal E}} G_{\mathcal C}(k,m) x^m
\]
converge absolutely in $\Omega(\inter{\mathfrak D})$. The functions $F_k$ and $H_k$ are therefore also analytic in $\Omega(\inter{\mathfrak D})$. 

Remark finally that by the Markov property, for any $k,m\in{\cal E}$, 
\[
\sum_{n=0}^\infty \P_k(Z(n)=m)  = \sum_{\ell\in\Z^d\setminus{\mathcal C}} \P_k(Z(\tau) = \ell, \; \tau < +\infty) \sum_{n=0}^\infty \P_\ell(Z(n)=m)  + G_{\mathcal C}(k,m). 
\]
and consequently, using \eqref{eq1-proof-lemma-functional-equation}, throughout the set $\Omega(\inter{\mathfrak D})$, one gets 
\begin{align*}
\frac{x^k}{1-{\mathcal P}(x)}  = \frac{F_k(x)  }{1-{\mathcal P}(x)} + H_k(x). 
\end{align*} 
\end{proof}

As a consequence of Lemma~\ref{functional-equation-lemma}, we obtain  
 
 \begin{cor}\label{integral-representation-lemma1}{\bf Integral representation of the Green function}.\,  Under the hypotheses (A1),   and for any $r=(r_1,\ldots, r_d)\in \inter{\mathfrak D}$, and $k,m\in{\mathcal E}$, the integral representation \eqref{eq-integral-representation} holds. 
 \end{cor}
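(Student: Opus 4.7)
The plan is to apply the standard multivariate Cauchy coefficient-extraction formula (for Laurent expansions on a polycircle) to the function $H_k$, using the identity established in Lemma~\ref{functional-equation-lemma}. Since $H_k(x) = (x^k - F_k(x))/(1-\mathcal{P}(x))$ throughout $\Omega(\inter{\mathfrak D})$ and the point $r=(r_1,\ldots,r_d)\in\inter{\mathfrak D}$, the polycircle $\{x:|x_j|=r_j,\ j=1,\ldots,d\}$ is entirely contained in $\Omega(\inter{\mathfrak D})$, so the integrand on the right-hand side of \eqref{eq-integral-representation} is well defined and continuous on this polycircle.

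First, I would substitute the series representation $H_k(x)=\sum_{m\in{\cal E}}G_{\cal C}(k,m)\,x^m$, which converges absolutely in $\Omega(\inter{\mathfrak D})$ by Lemma~\ref{functional-equation-lemma}. Extending $G_{\cal C}(k,m)$ by zero for $m\in\Z^d\setminus{\cal E}$, I can view this as a Laurent series indexed over $\Z^d$. On the compact polycircle $\{|x_j|=r_j\}$, the absolute convergence persists, so Fubini--Tonelli allows termwise integration:
\begin{align*}
\frac{1}{(2\pi i)^d}\int_{|x_1|=r_1}\cdots\int_{|x_d|=r_d} \frac{H_k(x)}{x^{m+1}}\,dx_1\cdots dx_d
 = \sum_{m'\in\Z^d} G_{\cal C}(k,m') \prod_{j=1}^d \frac{1}{2\pi i}\oint_{|x_j|=r_j} x_j^{m'_j-m_j-1}\,dx_j.
\end{align*}
Each one-dimensional contour integral equals $1$ when $m'_j=m_j$ and $0$ otherwise, so the product collapses to $\mathbb{1}_{m'=m}$, leaving exactly $G_{\cal C}(k,m)$.

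Next I would replace $H_k(x)$ on the left by $(x^k-F_k(x))/(1-\mathcal{P}(x))$, which is legal since the two agree on the polycircle by Lemma~\ref{functional-equation-lemma}. This yields \eqref{eq-integral-representation} directly.

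There is no real obstacle here: the only delicate point is the interchange of sum and integral, which is immediate from absolute convergence on the polycircle (a compact set inside the domain of analyticity). The rest is the orthogonality of monomials on a torus, which is the standard Cauchy coefficient formula in several variables applied to a Laurent expansion; the fact that ${\cal C}$ need not be contained in the positive orthant (so that $m$ may have negative coordinates) does not affect the argument because the formula is valid for any integer exponents.
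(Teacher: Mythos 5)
Your proof is correct and follows essentially the same route as the paper: apply the multivariate Cauchy coefficient formula (orthogonality of monomials on the polycircle, justified by absolute convergence) to extract $G_{\mathcal C}(k,m)$ from the Laurent series of $H_k$, then substitute the identity \eqref{functional-equation}. The paper simply invokes "the Cauchy integral formula for the Laurent's coefficients of $H_k$" without writing out the term-by-term integration, so your version is the same argument spelled out in more detail.
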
 
 \begin{proof}  Indeed, recall that by Lemma~\ref{functional-equation}, the function $H_k$ is analytic in $\Omega(\inter{\mathfrak D})$. Hence,  for any $r=(r_1,\ldots, r_d)\in \inter{\mathfrak D}$ and $k,m\in{\mathcal E}$, using the Cauchy integral formula for the Laurent's coefficients of $H_k$ we obtain 
 \[
  G_{\mathcal C}(k,m)  = \frac{1}{(2\pi i)^d} \int_{|x_1|= r_1}\ldots \int_{|x_d|= r_d} \frac{H_k(x)}{x_1^{m_1+1}\cdots x_d^{m_d+1}} \, dx_1\ldots dx_d,
 \]
 and using next the identity \eqref{functional-equation}, we get \eqref{eq-integral-representation}. 
 \end{proof}

 \begin{lemma}\label{analytic-continuation-lemma}{\bf Analytic continuation of the functions $F_k$}.\,  Under the hypotheses (A1) and (A2), for any $u\in{\mathbb S}^{d-1}\cap  {\mathcal C}$ 
 and $k\in{\cal E}$, the function $F_k$ can be continued as an analytic function to the set $\Omega(V_u)$ for some neighborhood $V_u$ of the point $r(u)$ in $\R^d$. 
 \end{lemma}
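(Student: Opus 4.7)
The plan is to apply Cram\'er's exponential change of measure to rewrite $F_k$ in terms of the twisted random walk $(Z^u(n))$, and then exploit the fact that its drift $\mathfrak m(u) = \|\mathfrak m(u)\|\, u$ lies strictly inside the open cone $\mathcal C$ to derive an exponential moment bound on the exit position $Z^u(\tau)$. This will yield absolute convergence of the Laurent expansion of $F_k$ on a complex neighborhood of $r(u)$, from which analytic continuation follows.

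Since $P(\alpha(u)) = 1$, the tilting relation $d\P^u/d\P|_{\mathcal F_n} = e^{\alpha(u)\cdot (Z(n) - Z(0))}$ yields, for every $m \in \Z^d$, the identity $\P_k(Z(\tau) = m,\, \tau < \infty) = e^{\alpha(u)\cdot (k-m)}\,\P^u_k(Z^u(\tau) = m,\, \tau < \infty)$, where $\P^u_k$ denotes the law of the twisted walk started at $k$. Parametrising $x_j = r(u)_j e^{\beta_j + i s_j}$ with $\beta, s \in \R^d$ gives
\[
F_k(x) = e^{\alpha(u)\cdot k}\, \E^u_k\bigl[e^{(\beta + is)\cdot Z^u(\tau)}\, \mathbb{1}_{\{\tau < \infty\}}\bigr].
\]
By dominated convergence it therefore suffices to exhibit $\varepsilon > 0$ such that $\E^u_k\bigl[e^{\beta\cdot Z^u(\tau)}\,\mathbb{1}_{\{\tau < \infty\}}\bigr] < \infty$ for every $\beta \in \R^d$ with $\|\beta\| \le \varepsilon$; one then obtains absolute, locally uniform convergence of the Laurent series of $F_k$ on $\Omega(V_u)$ with $V_u = \{(r(u)_1 e^{\beta_1}, \ldots, r(u)_d e^{\beta_d}) : \|\beta\| < \varepsilon\}$, which is the desired analytic continuation.

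To establish this moment bound I would first show $\P^u_k(\tau = n) \le C e^{-c n}$. Since $\mathcal C$ is a cone, $\{Z^u(n) \notin \mathcal C\} = \{Z^u(n)/n \notin \mathcal C\}$; the jumps of $Z^u$ retain exponential moments in a neighborhood of $0$ by (A1)(ii); and the Cram\'er rate function $I^u(y) = \sup_\lambda \bigl(\lambda\cdot y - \ln P(\alpha(u)+\lambda)\bigr)$ is convex and attains its unique minimum $0$ at $\mathfrak m(u) \in \mathcal C$, so $\inf_{y \notin \mathcal C} I^u(y) > 0$ and the LDP upper bound applied to the closed set $\R^d \setminus \mathcal C$ delivers the desired exponential decay. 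For the summability of $\sum_n \E^u_k\bigl[e^{\beta\cdot Z^u(n)}\,\mathbb{1}_{\{\tau = n\}}\bigr]$, I would tilt $\P^u$ once more by $\beta$: for $\|\beta\|$ small the resulting walk $\widetilde Z^{u,\beta}$ has mean jump $\nabla P(\alpha(u)+\beta)/P(\alpha(u)+\beta)$, still in $\mathcal C$ by continuity of $\nabla P$ and openness of $\mathcal C$, so the previous step applies uniformly and gives $\widetilde\P^{u,\beta}_k(\tau = n) \le C' e^{-c' n}$. Combined with $\ln P(\alpha(u)+\beta) = O(\|\beta\|)$ this produces a bound $\le C'\, e^{\beta\cdot k}\, e^{(C_1 \|\beta\| - c')n}$ that is summable for $\varepsilon$ chosen small enough.

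\emph{Main obstacle.} The crux is the Cram\'er large-deviation estimate on $\P^u_k(Z^u(n) \notin \mathcal C)$. Two points demand care: the jumps of $Z^u$ are unbounded, so one must invoke the LDP upper bound in sufficient generality, noting that $P$ is analytic only in a neighborhood of $D$ and not on all of $\C^d$; and the cone $\mathcal C$ is not assumed convex, which rules out replacing $\mathcal C$ by a single enclosing half-space and forces the argument to work directly with the rate function $I^u$ on the closed set $\R^d \setminus \mathcal C$.
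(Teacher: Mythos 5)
Your proposal takes a genuinely different route from the paper's, though the two ultimately rest on the same geometric fact. The paper never passes through the twisted walk: it applies the Chernoff bound $\P_k(Z(n) = m,\, \tau = n) \leq x^{k-m}\mathcal P(x)^n$ directly, first with $x = r(u_m)$ (where $\mathcal P = 1$) for $n\leq N$ and then with a fixed interior point $r^0$ for $n>N$, and optimizes over $N$, producing the explicit bound $\P_k(Z(\tau)=m,\tau<\infty)\leq (A_k\|m\|+B_k)\exp(-\|m\|\,\alpha(u_m)\cdot u_m)$. Summability against $e^{\alpha\cdot m}$ for $\alpha$ near $\alpha(u)$ then follows from the inequality $\alpha(v)\cdot v > \alpha(u)\cdot v + \eps$, valid uniformly in $v\in\mathbb S^{d-1}\setminus\mathcal C$ by compactness of that set --- this is the paper's version of your observation that $\inf_{\R^d\setminus\mathcal C} I^u>0$.

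Your LDP formulation is conceptually clean but leaves unaddressed several points that are the real content of the argument.  (a) Cram\'er's theorem governs $(Z^u(n)-k)/n$, not $Z^u(n)/n$; since $\mathcal C$ need not be convex, the set $\R^d\setminus\mathcal C - k/n$ is not contained in $\R^d\setminus\mathcal C$, so you must first pass to a fattened closed set avoiding $\mathfrak m(u)$ before invoking the upper bound.  (b) The double-tilting step requires the exponential bound on $\{\tau=n\}$ under the $\beta$-tilted law to hold with constants uniform over $\|\beta\|\leq\eps$ and for all $n$, whereas Cram\'er gives only a $\limsup$-in-$n$ statement for each fixed law; establishing this uniformity amounts to covering $\mathbb S^{d-1}\setminus\mathcal C$ by finitely many half-spaces with Chernoff exponents continuous in $\beta$, which is essentially the paper's explicit estimate rewritten.  (c) To get $\inf_{\R^d\setminus\mathcal C}I^u>0$ from the unique zero of $I^u$ at $\mathfrak m(u)\in\mathcal C$ you need $I^u$ to be a good rate function with compact sublevel sets; this holds by (A1)(ii) but should be stated.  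None of these is fatal, but they are precisely what the paper's more elementary Chernoff argument handles explicitly, which is what it buys at the cost of the two-scale optimization over $N$.
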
 
 
 \begin{proof}
 To prove this lemma, it is sufficient to show that for any ${u}\in{\mathbb S}^{d-1}\cap  {\mathcal C}$ 
 and $k\in{\cal E}$, the series 
 \be\label{eq1-proof-analytic-continuation-lemma} 
 F_k(x) = \E_k(x^{Z(\tau)},\; \tau < +\infty) = \sum_{m\in\Z^d\setminus{\mathcal C}} \P_k\bigl(Z(\tau) = m, \; \tau<\infty \bigr) x^m
 \ee
 converges throughout some neighborhood $V_u$ of the point $r({u})$ in $\R^d$. 
 
 For this, we notice first that for any $x=(x_1,\ldots,x_d)\in  {\mathfrak D}$, $k\in{\cal E}$, and $n\in\N$, 
\[
\E_k\bigl(x^{Z(\tau)}, \; \tau=n \bigr) \leq \sum_{m\in\Z^d} \P_k(Z(n)=m)  x^m  ~=~  x^k{\mathcal P}^n(x) 
\]
and consequently,  for any $x=(x_1,\ldots,x_d)\in {\mathfrak D}$, $k\in{\cal E}$,  $n\in\N$ and $m\in\Z^d\setminus{\mathcal C}$, 
\[
\P_k\bigl(Z(n) = m, \; \tau=n \bigr) \leq x^{k-m}{\mathcal P}^n(x).  
\]
Since according to the definition of the set ${\mathfrak D}$, for any $r\in\inter{\mathfrak D}$, we have $0 < {\mathcal P}(r) < 1$ and according to the definition of the mapping $u\to r(u)$, for any $u\in{\mathbb S}^{d-1}$, we have ${\mathcal P}(r(u))=1$, using the above relation  with $x=r(u_{m})\in\partial{\mathfrak D}$ for $n \leq N$ and with $x = r^0$ for some $r^0\in\inter{\mathfrak D}$ in the case when  $n > N$, we obtain 
\begin{align*}
\E_k\bigl(Z(\tau) = m, \; \tau<\infty \bigr) &\leq N \bigl(r(u_{m})\bigr)^{k-m}  + \frac{({\mathcal P}(r^0))^{N+1}}{1-{\mathcal P}(r^0)} (r^0)^{k-m} \\
& \hspace{-2.5cm}\leq N \exp\bigl(\alpha(u_{m})\cdot (k-m) \bigr) + \frac{(r^0)^{k} }{1-{P}(\alpha^0)} \exp\bigl((N+1)\ln({P}(\alpha^0)) - \alpha^0\cdot m\bigr) 
\end{align*} 
with  $\alpha^0= (\ln r_1, \ldots, \ln r_d)\in\inter{D}$. Remark now that $\alpha^0\not=\alpha(u_{m})$ because the point $\alpha^0$ belongs to the interior of the set $D$ and the point $\alpha(u_m)$ belongs to its boundary. Since the point $\alpha(u_{m})$ is the only point in the set $D$ where the function $\alpha \to \alpha\cdot u_{m}$ achieves its maximum over $D$, from this it follows that 
\[
\alpha^0\cdot m <  \alpha(u_{m})\cdot m, 
\]
and consequently,  choosing $N=N_m\in\N$ such that 
\[
N_m\leq  \frac{(\alpha(u_{m}) - \alpha^0)\cdot m}{\ln(1/{P}(\alpha^0))} < N_m+1
\]
one gets 
\begin{align}
\E_k\bigl(Z(\tau) = m, \; \tau<\infty \bigr) &\leq  \left( \frac{(\alpha(u_{m}) - \alpha^0)\cdot m}{\ln(1/{P}(\alpha^0))}   e^{\alpha(u_{m})\cdot k} + \frac{(\xhat)^{k} }{(1-{ P}(\alpha^0))} \right) e^{-\alpha(u_{m})\cdot m}\nonumber \\
&\leq  (A_k\|m\| + B_k) \exp(- \|m\| \alpha(u_m)\cdot u_m ) \label{eq2-proof-lemma-analytic-continuation}
\end{align} 
with $A_k = 2\sup_{\alpha\in D}\|\alpha\| e^{\alpha(u_k)\cdot k}/\ln(1/{P}(\alpha^0)) $ and $B_k = e^{\alpha^0\cdot k}/(1-{P}(\alpha^0))$. 

Consider now $u\in{\mathbb S}^{d-1}\cap{\mathcal C}$ and remark that for any $v\in{\mathbb S}^{d-1}\setminus{\mathcal C}$, 
\[
\alpha(v)\cdot v > \alpha(u)\cdot v 
\]
because the point $\alpha(v)$ is the only point in the set $D$ where the function $\alpha\to \alpha\cdot v$ achieves its maximum over $D$, and in this case $v  \not= {u}$. Since the functions $v\to \alpha(v)\cdot v$ and $v\to \alpha(u)\cdot v$ are  continuous on ${\mathbb S}^{d-1}\setminus{\mathcal C}$ and the set ${\mathbb S}^{d-1}\setminus{\mathcal C}$ is compact, from this it follows that for some $\eps > 0$, 
\[
\alpha(v)\cdot v > \alpha(u)\cdot v  + \eps, \quad \forall v\in {\mathbb S}^{d-1}\setminus{\mathcal C}. 
\]
The last relation implies that for  any $\alpha\in\R^d$ such that 
$\|\alpha({u})-\alpha\| < \eps/2$, the following relation holds for any 
\[
\alpha(v)\cdot v > \alpha \cdot v  + \eps/2, \quad \forall v\in{\mathbb S}^{d-1}\setminus{\mathcal C}
\]
and consequently, for any $m\in\Z^d\setminus{\mathcal C}$,
\[
\alpha(u_m)\cdot m = \|m\| \alpha(u_m)\cdot u_m > \|m\| \alpha \cdot u_m  + \eps/2\|m\| = \alpha\cdot m + \eps/2\|m\|.
\]
Using this  relation at the right hand side of \eqref{eq2-proof-lemma-analytic-continuation} we conclude therefore that for any $\alpha\in\R^d$ such that $\|\alpha({u})-\alpha\| < \eps/2$, the following relation holds 
\[
\sum_{m\in\Z^d\setminus{\mathcal C}} \E_k\bigl(Z(\tau) = m, \; \tau<\infty \bigr) \exp( \alpha\cdot m) \leq \sum_{m\in\Z^d\setminus{\mathcal C}} (A_k\|m\| + B_k) \exp(- \|m\|  \eps/2).
\]
Since the series at the right hand side of the last relation converges, this proves that  in some neighborhood of the point $r({u}) = (e^{\alpha_1({u})}, \ldots, e^{\alpha_d({u})})$, the series \eqref{eq1-proof-analytic-continuation-lemma} converges. 
 \end{proof}

 \begin{lemma}\label{lemma-Woess} Suppose that the condition (A1) is satisfied and let for some  $\eps > 0$ and ${u^0}\in{\cal C}$, a function $x\to \Psi(x)$ be analytic in the multicircular set 
 \be\label{eq1-lemma-Woess}
 \{ x=(x_1,\ldots,x_d) \in\C^d:~ \bigl||x_1|- r_1({u^0}))\bigr| < \eps, \ldots \bigl||x_d|- r_d({u^0})\bigr| < \eps\},
 \ee
 and do not vanish in the point $r({u^0}) = (r_1({u^0}), \ldots, r_d({u^0}))$. 
 Then for any $m\in\Z^d$, the integral
 \be\label{eq2-lemma-Woess}
 I_m = I_m(r) = \frac{1}{(2\pi i)^d} \int_{|x_1|= r_1}\ldots \int_{|x_d|= r_d} \frac{\Psi(x)}{x^{m+1} (1-{\mathcal P}(x))} \, dx_1\ldots dx_d
 \ee
 is well defined and does not depend on the point $r=(r_1,\ldots,r_d)$ throughout the set 
 \be\label{eq3-lemma-Woess}
 \{ r\in \inter{\mathfrak D}:~ \bigl|r_1- r_1({u^0}))\bigr| < \eps, \ldots ,\bigl| r_d - r_d({u^0})\bigr| < \eps \},
 \ee
 and as $\|m\|\to+\infty$, uniformly with respect to $u_m=m/\|m\|$ in some neighborhood of ${u^0}$, 
 \be\label{eq3p-lemma-Woess}
 I_m ~\sim~ \Psi(r(u_m)) \|{\mathfrak m}(u_m) \|^{-1} \sqrt{\det(Q(u_m))}\bigl(2\pi\|m\|)^{-(d-1)/2} (r(u_m))^{-m}.
 \ee
 \end{lemma}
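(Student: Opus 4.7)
The plan is to adapt the strategy of Theorem~25.15 of~\cite{Woess} to the present setting, where $\mathcal{P}$ is analytic only in a neighborhood of $\Omega({\mathfrak D})$ (due to possibly unbounded jumps) and where uniformity in $u_m$ on compacts of $\mathbb{S}^{d-1}\cap{\mathcal C}$ is required. The first step is the independence of $I_m$ from $r$ throughout the set~\eqref{eq3-lemma-Woess}. By hypothesis, $\Psi$ is analytic on $\Omega(W)$ with $W=\{r\in\R^d:|r_j-r_j(u^0)|<\eps,\,\forall j\}$; by Lemma~\ref{functional-equation-lemma}, $1/(1-\mathcal{P})$ is analytic on $\Omega(\inter{\mathfrak D})$. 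Consequently the integrand is analytic throughout the connected Reinhardt domain $\Omega(W\cap\inter{\mathfrak D})$ containing~\eqref{eq3-lemma-Woess}, and the multidimensional Cauchy theorem gives the claimed independence.

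Next, substituting $x_j=r_je^{is_j}$ converts the integral into a Fourier form,
\begin{equation*}
I_m=\frac{r^{-m}}{(2\pi)^d}\int_{[-\pi,\pi]^d}\frac{\Psi(re^{is})\,e^{-is\cdot m}}{1-\mathcal{P}(re^{is})}\,ds,
\end{equation*}
and I would push the torus through $r(u_m)\in\partial{\mathfrak D}$ by contour deformation. Concretely, start with $r$ having $\ln r=\alpha(u_m)-\beta u_m$ for $\beta>0$ small (which sits in $\inter{\mathfrak D}$ because $u_m\cdot\nabla P(\alpha(u_m))=\|\mathfrak{m}(u_m)\|>0$), and shift $s\mapsto s-i\gamma u_m$ with $\gamma$ slightly exceeding $\beta$. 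This moves the torus outward across $\partial{\mathfrak D}$; by strict convexity the pole of $1/(1-\mathcal{P})$ is crossed only at the saddle point $x=r(u_m)$, and the factor $e^{-is\cdot m}$ picks up the damping $e^{-(\gamma-\beta)\|m\|}$ so that the integral over the new torus is negligible. The residue collected in the shift furnishes the leading term.

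To evaluate it, introduce $\tilde s=R_{u_m}s$, in which $s\cdot m=\|m\|\tilde s_1$ and $s\cdot\mathfrak{m}(u_m)=\|\mathfrak{m}(u_m)\|\tilde s_1$ (using $\mathfrak{m}(u_m)=\|\mathfrak{m}(u_m)\|u_m$). Taylor expanding $P(\alpha(u_m)+iR_{u_m}^{t}\tilde s)$ at $\tilde s=0$ gives
\begin{equation*}
1-\mathcal{P}(r(u_m)e^{iR_{u_m}^{t}\tilde s})=-i\|\mathfrak{m}(u_m)\|\tilde s_1+\tfrac{1}{2}\langle R_{u_m}\mathcal{Q}(u_m)R_{u_m}^{t}\tilde s,\tilde s\rangle+O(|\tilde s|^3),
\end{equation*}
and the implicit function theorem yields, for each small transverse $\tilde s'=(\tilde s_2,\dots,\tilde s_d)$, a simple pole
\begin{equation*}
\tilde s_1^{*}(\tilde s')=-\frac{i\langle Q_{u_m}\tilde s',\tilde s'\rangle}{2\|\mathfrak{m}(u_m)\|}+O(|\tilde s'|^3)
\end{equation*}
in the lower half $\tilde s_1$-plane (by positive definiteness of $Q_{u_m}$). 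Residue computation in $\tilde s_1$ produces the prefactor $2\pi\Psi(r(u_m))/\|\mathfrak{m}(u_m)\|$ times the Gaussian density $\exp(-\|m\|\langle Q_{u_m}\tilde s',\tilde s'\rangle/(2\|\mathfrak{m}(u_m)\|))$, plus exponentially small errors from the shifted contour and from the tail $|\tilde s|\geq\delta$. Gaussian integration in $\tilde s'\in\R^{d-1}$ then yields a factor $(2\pi\|\mathfrak{m}(u_m)\|/\|m\|)^{(d-1)/2}(\det Q_{u_m})^{-1/2}$, and assembling all constants produces the right-hand side of~\eqref{eq3p-lemma-Woess}.

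The principal obstacle is the contour deformation of the second paragraph. In~\cite{Woess} the bounded-jumps assumption makes $P$ entire and allows shifts of any size, whereas here $P$ is analytic only in a neighborhood of $D$; the parameters $\beta,\gamma$ must therefore be chosen small enough that the family of swept tori stays within the analyticity region, and one must verify that $\{\mathcal{P}=1\}$ meets each torus transversally only at the saddle point $r(u_m)$. Both conditions, together with the remainder estimates (tail in $|\tilde s|\geq\delta$, Taylor remainders, decay rate on the deformed contour), are established uniformly for $u_m$ in a compact neighborhood of $u^0$ using the continuity of $u\mapsto\alpha(u),\mathfrak{m}(u),\mathcal{Q}(u),R_u,Q_{u}$ and compactness, which provides the uniformity of~\eqref{eq3p-lemma-Woess} claimed in the lemma.
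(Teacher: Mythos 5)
Your proof takes a genuinely different route from the paper's. You push the torus \emph{past} the singular surface $\{\mathcal{P}=1\}$ (shift by $-i\gamma u_m$ with $\gamma>\beta$), so the leading term is the residue collected in the normal variable $\tilde s_1$, integrated against a Gaussian in the transverse variables $\tilde s'$. The paper instead stops the torus exactly \emph{on} $\partial\mathfrak{D}$ at $r(u_m)$, where $1/(1-\mathcal{P})$ has an integrable singularity; the move onto the boundary is justified not by Fourier inversion (unavailable here, as the paper notes) but by a geometric-series/dominated-convergence argument together with the local integrability of $1/|1-\phi_u|$, and the integral is then split with a smooth radial cutoff $f_\delta$: the far part is $o(\|m\|^{-(d-1)/2})$ by Riemann--Lebesgue, and the near part is evaluated by Woess's Theorem~25.15 stationary-phase computation. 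Your residue computation is the more transparent way to see where $\|\mathfrak{m}(u_m)\|^{-1}$ and the transverse Gaussian arise, but it buys that transparency at the cost of exactly the deformation estimates you flag: since $\mathcal{P}$ is analytic only in a neighborhood of $\Omega(\mathfrak{D})$, the shift parameters must be small enough that every swept torus stays in that neighborhood, and one must invoke Spitzer's P7.5 ($|\mathcal{P}(x)|<\mathcal{P}(|x|)$ strictly for $x$ off the positive-real torus, under irreducibility) together with continuity and compactness to ensure that on the far part of each swept torus the denominator stays bounded away from zero, so that no poles other than the near-saddle sheet $\tilde s_1=\tilde s_1^*(\tilde s')$ are crossed. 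You leave these checks as to-dos; they do go through and they are what yields uniformity on a compact neighborhood of $u^0$, but they are the real technical content of the step and would need to be written out. One small remark: your Gaussian integral yields $(\det Q_{u_m})^{-1/2}$, which is the correct Ney--Spitzer constant; the $\sqrt{\det(Q(u_m))}$ appearing in \eqref{eq3p-lemma-Woess} (and in \eqref{eq-main-result}, where the exponent's sign is also flipped) appears to be a typo in the statement.
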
 

\begin{proof} Indeed, recall that (see  Lemma~\ref{functional-equation} ) the function $x\to 1/(1-{\mathcal P}(x)$ is analytic in the multicircular set \eqref{multicircular-set}. Since under our hypotheses, the function $\Psi$ is analytic in  the multicircular set \eqref{eq1-lemma-Woess}, and since the set \eqref{multicircular-set} does not meet the hyper-planes $\{x\in\C^d: x_i = 0\}$, $i\in\{1,\ldots, d\}$, from this it follows that for any $m{\in}\Z^d$, the function 
 \[
x=(x_1,\ldots, x_d)\to \frac{\Psi(x)}{x^{m+1} (1-{\mathcal P}(x))} 
\]
 is analytic in the multicircular set \eqref{eq3-lemma-Woess}. This proves that  throughout the set \eqref{eq3-lemma-Woess}, the integrals \eqref{eq2-lemma-Woess} are well defined and do not depend on $r$.

 To get \eqref{eq3-lemma-Woess} we use the method of Woess, proposed in~\cite{Woess} in order to get the asymptotics of the Green function for a homogeneous random walk in $\Z^d$. The idea and the steps of our proof are  the same as in ~\cite{Woess} with a  difference that in the proof of Woess, instead of the function $\Psi$, one has a function $x=(x_1,\ldots, x_d) \to x^k = x_1^{k_1}\cdots x_d^{k_d}$ which is analytic everywhere in $\{x\in \C^d: x_i\not= 0, \; \forall 1\leq i\leq d\}$, and since Woess considered a random walk with bounded jumps,  in his proof, the jump generating function of the random walk was also analytic everywhere in $\C^d$. In our  case, the jump generating function $P$ (resp. the function ${\mathcal P}$) is assumed to be finite and therefore analytic only in a neighborhood of the set $D=\{\alpha\in\R^d:~P(\alpha)\leq 1\}$  (resp.  of the set ${\mathfrak D}=\{x\in]0,+\infty[^d: {\mathcal P}(x) \leq 1\}$) and the function $\Psi$ is assumed to be analytic only in the set \eqref{eq1-lemma-Woess}. Hence,  we should be careful when choosing a neighborhood of the point $r({u^0})$ where all estimates will be performed. 
 
 Since under our assumptions, the function ${\mathcal P}$ is analytic in a neighborhood of the set ${\mathfrak D}$, 
 without any restriction of generality, we can assume that for a given $\eps > 0$, it is analytic in  the multicircular set \eqref{eq1-lemma-Woess}. 
 
 Furthermore, since under our assumptions, we investigate the asymptotic of $G_{\mathcal C}(k,m)$ as $\|m\|\to+\infty$ for  $u_m = m/\|m\|$ closed to ${u^0}$, and  since  the mapping $u\to r(u)$ is continuous, 
 without any restriction of generality, throughout  our proof,   we can suppose  that the point $u_m=m/\|m\|$ belongs to the set 
  \be\label{eq4-lemma-Woess}
 {\mathfrak U}_\eps = \{u\in{\mathcal C}:~|r_1(u) - r_1({u^0})| \leq \eps/2, \ldots,  |r_d(u) - r_d({u^0})| \leq \eps/2\}.  
 \ee
 
 Moreover, since under our hypotheses, the function $\Psi$ is analytic in a neighborhood of the point $r({u^0})$ and $\Psi(r({u^0}))\not=0$, without any restriction of generality, we can suppose that 
 for a given $\eps > 0$,
 \be\label{eq4p-lemma-Woess}
\Psi(r(u)) \not= 0, \quad \forall u\in{\mathfrak U}_\eps.  
 \ee
 
 Now we are ready to perform the first step of the proof Woess, i.e. to show that for any $u\in{\mathfrak U}_\eps$,
 \be\label{eq5-lemma-Woess}
 I_m = \frac{1}{(2\pi i)^d} \int_{|x_1| = r_1(u)} \ldots \int_{|x_d|= r_d(u)} \frac{\Psi(x)}{x^{m+1} (1-{\mathcal P}(x))} \, dx_1\ldots  dx_d. 
 \ee
Remark that in our setting, to gets this relation, we can not use  the method of Woess (see the proof of Lemma~25.17 in \cite{Woess}) in a straightforward way because we have no an analogue of the formula of the Fourier inversion applied by Woess in his proof for homogeneous random walk.  Instead of that, let us notice that for any point $r = (r_1, \ldots, r_d)$ in the set \eqref{eq3-lemma-Woess}, the following relation holds 
\begin{align*}
I_m &= \lim_{N\to\infty} \frac{1}{(2\pi i)^d} \int_{|x| = r_1} \ldots \int_{|x_d|= r_d} \frac{\Psi(x) (1- ({\mathcal P}(x))^{N+1})}{x^{m+1} (1-{\mathcal P}(x))} \, dx_1\ldots dx_n\\
&= \lim_{N\to\infty} \sum_{n=0}^N \frac{1}{(2\pi i)^d} \int_{|x| = r_1} \ldots \int_{|x_d|= r_d}  \frac{\Psi(x)({\mathcal P}(x))^{n}}{x^{m+1}} \, dx_1\ldots dx_n
\end{align*}
 because in this case $|{\mathcal P}(x)|\leq {\mathcal P}(r^0) < 1$ for any $x=(x_1,\ldots, x_d) \in\C^d$ with $|x_1|= r_1, \ldots, |x_d| = r_d$. Moreover, for any $n\in\N$ and $u\in{\mathfrak U}_\eps$, 
 \begin{align*}
 \int_{|x| = r_1}\ldots \!\!\int_{|x_d|= r_d}  &\frac{\Psi(x)({\mathcal P}(x))^{n}}{x^{m+1} } \, dx_1\ldots dx_n \\ &=  \int_{|x| = r_1(u)} \!\!\ldots\!\! \int_{|x_d|= r_d(u)}  \frac{\Psi(x)({\mathcal P}(x))^{k}}{x^{m+1}} \, dx_1\ldots dx_n 
 \end{align*} 
 because the function $x \to \Psi(x) ({\mathcal P}(x))^n x_1^{-m_1-1}\cdots x_d^{-m_d-1}$ is analytic in the multicircular set \eqref{eq1-lemma-Woess}. Hence, to get \eqref{eq5-lemma-Woess} it is sufficient to show that for any $u\in{\mathfrak U}_\eps$, 
  \begin{align}
\lim_{N\to\infty}  \int_{|x| = r_1(u)} &\ldots \int_{|x_d|= r_d(u)}  \frac{\Psi(x)}{x^{m+1} } \left(\sum_{n=0}^N ({\mathcal P}(x))^{k} \right) \, dx_1\ldots dx_n 
 \nonumber \\
&= \int_{|x_1| = r_1(u)} \ldots \int_{|x_d|= r_d(u)} \frac{\Psi(x)}{x^{m+1} (1-{\mathcal P}(x))} \, dx_1\ldots  dx_d. \label{eq6-lemma-Woess}
 \end{align}
 For this, we remark that for any $u\in{\mathfrak U}_\eps$, the function $x\to \Psi(x)x^{-m-1}$ is continuous and therefore bounded on the set $$W_u = \{x=(x_1,\ldots, x_d)\in\C^d: |x_1| = r_1(u), \ldots,  |x_d|=r_d(u)\},$$ and that for any $x\in W_u\setminus\{r(u)\}$, because of Assumption (A1) (i) (see Proposition P7.5 of Spitzer~\cite{Spitzer}), one has 
 \[
 |{\mathcal P}(x)| < {\mathcal P}(r(u)) = 1.
 \]
 Hence, for any $x\in W_u\setminus\{r(u)\}$, 
 \[
 \sum_{n=0}^N ({\mathcal P}(x))^{k}  = \frac{1-({\mathcal P}(x))^{N+1}}{1-{\mathcal P}(x)},
 \]
 and consequently, by the dominated convergence theorem, to get \eqref{eq6-lemma-Woess}, it is sufficient  to show that for any $u\in{\mathfrak U}_\eps$, the function 
 \[
 t=(s_1,\ldots, s_d)\to 1/|1- {\mathcal P}(r_1(u) e^{is_1}, \ldots, r_d(u) e^{is_d})|
 \] 
 is integrable on $[-\pi, \pi]^d$, or equivalently, that this function  is integrable in a neighborhood of the origin $0=(0,\ldots,0)$ of $\R^d$. The proof of this statement is given in the book of Woess~\cite{Woess} (see the proof  Lemma~25.17 of 
 \cite{Woess}) and  in this proof, the condition of  the bounded jumps of the random walk was not used. For  any $u\in{\mathfrak U}_\eps$, relation \eqref{eq5-lemma-Woess} therefore holds and consequently,  
 \[
 I_m = \frac{1}{(2\pi)^d (r(u))^{m}}\int_{[-\pi,\pi]^d} \frac{\psi_{u}(s)e^{-i s\cdot m}}{1- \phi_{u}(s) } \, dt 
 \]
 where  for $u\in{\mathfrak U}_\eps$ and $s=(s_1\ldots,s_d)\in[-\pi,\pi]^d$, we denote 
 \[
 \psi_{u}(s) =  \Psi(r_1(u)e^{is_1},\ldots, r_d(u)e^{is_d}) = \Psi(e^{\alpha_1(u) + is_1},\ldots, e^{\alpha_d(u)+is_d})
 \]
 and 
 \begin{align*}
  \phi_{u}(s) &=  {\mathcal P}(r_1(u)e^{is_1},\ldots, r_d(u))e^{is_d}) = {\mathcal P}(e^{\alpha_1(u) + is_1},\ldots, e^{\alpha_d(u)+is_d}) \\ &= P(\alpha_1(u) + is_1, \ldots,  \alpha_d(u)+is_d). 
 \end{align*} 
Next, with exactly the same arguments as in the book of Woess~\cite{Woess} (see the relation (25.18) on the page 272), by using the Riemann-Lebesgue lemma, for any $ 0< \delta < \eps/2$ (we will assume also that $\delta < \pi$) and with a radial (i.e. invariant with respect to the rotations of $\R^2$ around the origin) real valued function $f_\delta \in C^{\infty}([-\pi, \pi]^2)$ such that $0\leq f_\delta \leq 1$, \; $f_\delta(s) = 1$ for $\|s\| < \delta/3$ and $f_\delta(s) = 0$ for $\|s\| > 2\delta/3$, one gets that for any $u\in {\mathfrak U}_\eps$
 \[
  I_m  = I_m^\delta(u) +  \frac{1}{ (r(u))^m} {\mathfrak o}_u(\|m\|^{-(d-1)/2}) 
 \]
 where 
 \[
 I_m^\delta(u)  = \frac{1}{(2\pi)^2  (r(u))^m }\int_{\{\|s\|\leq \delta \}}\frac{\psi_{u}(s)f_\delta(s)e^{-i s\cdot m}}{1- \phi_{u}(s) } \, ds
 \]
 and $\|m\|^{(d-1)/2} {\mathfrak o}_u(\|m\|^{-1/2}) \to 0$ as $\|m\|\to\infty$ uniformly with respect to $u\in {\mathfrak U}_\eps$. 
 
 \noindent   
Remark finally that under our assumptions (since  the functions $\Psi$ and ${\mathcal P}$ are analytic in the set \eqref{eq1-lemma-Woess}), for some $\delta > 0$ small enough and any $u\in {\mathfrak U}_\eps$, the functions $\psi_{u}$ and $\phi_{u}$ are  analytic in the multidisk $\{s=(s_1\ldots,s_d)\in\C^d:~ |s_1| < 2\delta, \ldots,  |s_d| < 2\delta\}$, all their derivatives depend continuously on $u\in {\mathfrak U}_\eps$, and   by \eqref{eq4p-lemma-Woess}, 
\[
\psi_u(0) = \Psi(r(u)) \not= 0, \quad \forall u\in {\mathfrak U}_\eps. 
\]
Letting $u = u_m$ and using the same arguments as in the proof Theorem~25.15 of \cite{Woess} (where in the notations of \cite{Woess},  the function $f_{u,k}(s) = \exp(i R_uk\cdot s)f(s)/{\mathcal B}_u(is)$ should be replaced by $h_{u_m}(s)= \psi_{u_m}(R^{t}_{u_m} s) f_\delta(s)/{\mathcal B}_{u_m}(is)$), we obtain therefore that as $\|m\|\to\infty$ and uniformly with respect to $u_m \in {\mathfrak U}_\eps$,  
\[
 I_m ~\sim~ \frac{\psi_{u_m}(0) \sqrt{\det(Q(u_m))}}{\|{\mathfrak m}(u_m) \| \bigl(2\pi\|m\|)^{(d-1)/2} (r(u))^{m}}
 \] 
 or equivalently,  that \eqref{eq3p-lemma-Woess} holds. 
 \end{proof} 
 
 \begin{lemma}\label{Duraj-lemma} Under the hypotheses (A1) and (A2), for any $u\in{\mathbb S}^{d-1}\cap{\mathcal C}$, the function $h_{\alpha({u})}:{\cal E}\to \R$ is strictly positive everywhere in ${\cal E}$
 \end{lemma}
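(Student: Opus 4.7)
Since $\alpha(u)\in\partial D$, we have $P(\alpha(u))=1$, so $(e^{\alpha(u)\cdot Z(n)})_{n\geq 0}$ is a non-negative $\P_k$-martingale, and the associated change of measure produces the twisted random walk $(Z^u(n))$ of Definition~\ref{def-homeomorphism-sphere-boundary}, whose law started from $k$ I will denote by $\P^u_k$. A direct summation over paths leaving $\mathcal{C}$ for the first time at step $n$ gives
\[
\E_k\bigl(e^{\alpha(u)\cdot Z(\tau)},\,\tau<+\infty\bigr)=e^{\alpha(u)\cdot k}\,\P^u_k(\tau<+\infty),
\]
so that $h_{\alpha(u)}(k)=e^{\alpha(u)\cdot k}\,\P^u_k(\tau=+\infty)\geq 0$. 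The lemma reduces to showing that $\P^u_k(\tau=+\infty)>0$ for every $k\in{\cal E}$.

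\medskip

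I first produce a single $k_0\in{\cal E}$ with $\P^u_{k_0}(\tau=+\infty)>0$. Under $\P^u$, $(Z^u(n))$ has i.i.d.\ increments of mean $\mathfrak{m}(u)=\nabla P(\alpha(u))$, and the characterisation of the homeomorphism~\eqref{homeomorphism-boubdary-sphere} forces $\mathfrak{m}(u)=\|\mathfrak{m}(u)\|\,u\in\mathcal{C}$. Since $\mathcal{C}$ is open, there exists $\delta>0$ with $B(\mathfrak{m}(u),\delta)\subset\mathcal{C}$; by the cone property this scales to $B(t\mathfrak{m}(u),t\delta)\subset\mathcal{C}$ for every $t>0$. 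Fix a large integer $c$ and a lattice point $k_0\in{\cal E}$ within distance $\sqrt{d}$ of $c\mathfrak{m}(u)$, and set $S_n=Z^u(n)-k_0$. The strong law of large numbers and a tightness estimate on the first $n_0$ steps produce an event
\[
A=\bigl\{\|S_n-n\mathfrak{m}(u)\|\leq n\delta/4\text{ for all }n\geq n_0\bigr\}\cap\bigl\{\max_{0\leq n\leq n_0}\|S_n\|\leq M\bigr\}
\]
of positive $\P^u$-probability for suitable finite $n_0,M$. Choosing $c$ so large that $c\delta>2(M+\sqrt{d})$, the inclusions $B(t\mathfrak{m}(u),t\delta)\subset\mathcal{C}$ combined with the triangle inequality yield $Z^u(n)=k_0+S_n\in\mathcal{C}$ on $A$ for every $n\geq 0$; hence $\P^u_{k_0}(\tau=+\infty)\geq\P^u(A)>0$, so $h_{\alpha(u)}(k_0)>0$.

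\medskip

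To propagate positivity to every $k\in{\cal E}$, I use that $h_{\alpha(u)}$ is harmonic for $(Z(n))$ killed at $\tau$; this is checked directly by the strong Markov property at time $1$ together with $P(\alpha(u))=1$, and requires no convexity assumption on $\mathcal{C}$. Iterating harmonicity yields the identity $h_{\alpha(u)}(j)=\sum_m\P_j(Z(n)=m,\tau>n)\,h_{\alpha(u)}(m)$ for every $n$, so $h_{\alpha(u)}(j)=0$ forces $h_{\alpha(u)}(m)=0$ for every $m$ with $\P_j(Z(n)=m,\tau>n)>0$ for some $n$. The irreducibility hypothesis (A2) then propagates a single zero of $h_{\alpha(u)}$ to all of ${\cal E}$, contradicting the previous step and concluding the proof.

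\medskip

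The main obstacle is the first step: extracting, from the drift of the twisted walk, a strictly positive probability of never leaving a possibly non-convex cone. The key geometric ingredient is that the ball $B(\mathfrak{m}(u),\delta)\subset\mathcal{C}$ scales together with $\mathcal{C}$ to a ball of radius $t\delta$ around $t\mathfrak{m}(u)$, which by the LLN dominates the deviation of $S_n$ from $n\mathfrak{m}(u)$ for large $n$, while a sufficiently deep starting point $k_0$ controls the initial fluctuations.
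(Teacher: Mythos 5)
Your proposal is correct, and the overall structure matches the paper's proof: identify $h_{\alpha(u)}(k)=e^{\alpha(u)\cdot k}\,\P^u_k(\tau=+\infty)$ via the twisted walk, prove strict positivity at a single site, then propagate using harmonicity and the irreducibility hypothesis (A2) — the paper phrases the last step as the minimum principle, you spell it out by iterating the mean-value identity, which is the same thing.

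The genuine difference is in the middle step, where strict positivity at one site is established. The paper does not argue directly: it cites Duraj's Proposition~1.1 (which requires the auxiliary hypothesis that the cone subtends an angle less than $\pi$), and then handles a general open cone ${\mathcal C}$ by choosing an open \emph{convex} sub-cone $\hat{\mathcal C}_u$ with $u\in\hat{\mathcal C}_u\subset{\mathcal C}$ satisfying Duraj's condition, so that $\P_k(\tau_u(\hat{\mathcal C}_u)=+\infty)>0$ for some $k$, and monotonicity in the cone gives the same for ${\mathcal C}$. You instead give a self-contained argument: since $\mathfrak m(u)=\nabla P(\alpha(u))$ is a positive multiple of $u$ and ${\mathcal C}$ is an open cone, a ball $B(\mathfrak m(u),\delta)\subset{\mathcal C}$ dilates to $B(t\mathfrak m(u),t\delta)\subset{\mathcal C}$, and the strong law of large numbers for the twisted walk keeps $Z^u(n)$ inside this expanding tube with positive probability once the starting site $k_0$ is chosen deep enough along the drift to absorb finitely many initial fluctuations. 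Your SLLN argument is more elementary and avoids both the appeal to Duraj's external result and the auxiliary convex sub-cone, at the cost of spelling out the $n_0,M,c$ bookkeeping; the paper's route is shorter on the page given the citation. Both are valid, and your version has the small advantage of being self-contained.

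One minor remark: you should note explicitly that $k_0\in{\cal E}$, i.e.\ that the chosen lattice point near $c\,\mathfrak m(u)$ actually lies in ${\mathcal C}$; this follows immediately from $c\delta>\sqrt d$ and $B(c\,\mathfrak m(u),c\delta)\subset{\mathcal C}$, but the inclusion is worth stating since the rest of the argument (and the appeal to (A2)) takes $k_0\in{\cal E}$ for granted.
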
 
 \begin{proof} Consider  the twisted random walk $(Z^u(n))$ with transition probabilities \eqref{twisted-rw},  and let $\tau_u({\mathcal C})$ denote the first time when the twisted random walk $(Z^u(n))$  exits from ${\mathcal C}$. Recall that by Lemma~3.1 of Duraj~\cite{Duraj}), for any $u\in{\mathbb S}^{d-1}\cap{\mathcal C}$, 
  \[
 h_{\alpha({u})}(k) = \exp(\alpha(u)\cdot k)  \P_k(\tau_u({\mathcal C}) = +\infty), \quad \forall k\in{\cal E}. 
 \]
 Since the function $k\to \P_k(\tau_u({\mathcal C}) = +\infty)$ is clearly non negative and harmonic for the twisted random walk $(Z^u(n))$ killed when leaving the cone ${\mathcal C}$, from this it follows that the function $h_{\alpha({u})}$ is non-negative and harmonic for the original random walk $(Z(n))$ killed when leaving the cone ${\mathcal C}$. Remark that to get this result neither convexity of the cone ${\mathcal C}$ nor the irreducibility of the random walk $(Z(n))$ throughout the set ${\cal E}$ is needed. 
 
Under the additional condition on the cone ${\mathcal C}$, when the angle between any two points of ${\mathbb S}^{d-1}\cap \overline{\mathcal C}$ is smaller than $\pi$, Duraj~\cite{Duraj}) proved that for any $u\in{\mathbb S}^{d-1}\cap{\mathcal C}$, the probability $\P_k(\tau_u({\mathcal C}) = +\infty)$ is non zero some for $k\in {\cal E}$ (see the proof of Proposition~1.1 of~\cite{Duraj})).  

Remark now that under our hypotheses, for any $u\in {\mathbb S}^{d-1}\cap{\mathcal C}$ there is an  open and convex cone $\hat{\mathcal C}_u$ satisfying the above additional condition of~\cite{Duraj} and such that $u\in \hat{\mathcal C}_u \subset {\mathcal C}$.  Hence, using the results of~\cite{Duraj}), we get that for any $u\in {\mathbb S}^{d-1}\cap{\mathcal C}$, there is $k\in\Z^d\cap \hat{\mathcal C}_u$ such that 
\[
h_{\alpha({u})}(k) =  \exp(\alpha(u)\cdot k)  \P_k(\tau_u({\mathcal C}) = +\infty) \geq  \exp(\alpha(u)\cdot k)  \P_k(\tau_u(\hat{\mathcal C}_u) = +\infty) > 0. 
\]
Since for any $u\in {\mathbb S}^{d-1}\cap{\mathcal C}$, the function $h_{\alpha({u})}$ is non-negative and harmonic for the random walk $(Z(n))$ killed when leaving the cone ${\mathcal C}$, and since we assume that the random walk $(Z(n))$ is irreducible on $\Z^d\cap {\mathcal C}$, by the minimum principle for harmonic functions, this proves that for any $u\in {\mathbb S}^{d-1}\cap{\mathcal C}$, the function $h_{\alpha({u})}$ is strictly positive everywhere in ${\cal E}$. 
 \end{proof} 
 \bigskip
 \noindent
 {\bf Proof of Theorem~\ref{main-result}}: This theorem is a consequence of Corollary~\ref{integral-representation-lemma1}, Lemma~\ref{analytic-continuation-lemma}, Lemma~\ref{Duraj-lemma} and Lemma~\ref{lemma-Woess}:~ Under the hypotheses (A1), by Corollary~\ref{integral-representation-lemma1},  for any $r = (r_1,\ldots, r_d)\in \inter{\mathfrak D}$ and $k,m\in\Z^d$, 
 \[
 G_{\mathcal C}(k,m)  = \frac{1}{(2\pi i)^d} \int_{|x_1|= r_1}\ldots \int_{|x_d|=r_d} \frac{x^k - F_k(x)}{x^{m+1} (1-{\mathcal P}(x))}  \, dx_1\ldots dx_d.
 \]
By  Lemma~\ref{analytic-continuation-lemma}, for any $k\in{\cal E}$ and $u\in{\mathbb S}^{d-1}\cap{\mathcal C}$, and some $\eps > 0$, the function $x\to \Psi_k(x)=x^k - F_k(x)$ can be continued as an analytic function to the multicircular set 
\[
\{x=(x_1,\ldots,x_d)\in\C^d:~ |x_1 - x_1({u})| < \eps, \ldots, |x_d-x_d({u})|<\eps\}. 
\]
And moreover, if the condition (A2) is also satisfied, then by Lemma~\ref{Duraj-lemma}, for any $k\in{\cal E}$ and  $u\in{\mathbb S}^{d-1}\cap{\mathcal C}$, according to the definition of the function $F_k$
\[
\Psi_k(r({u})) = r^k({u}) - F_k(r({u})) = h_{\alpha({u})}(k) \not=0. 
\]
Under the hypotheses (A1) and (A2), for any $u^0\in {\mathbb S}^{d-1}\cap{\mathcal C}$, the conditions of Lemma~\ref{lemma-Woess} are therefore satisfied and consequently, \eqref{eq-main-result} holds.

\bibliographystyle{amsplain}
\bibliography{ref}

\end{document}